\title{The Location of the First Ascent in a 123-Avoiding Permutation}
\author {Samuel Connolly\\ Department of Mathematics\\ University of Pennsylvania\and Zachary Gabor\\ Department of Mathematics\\Haverford College\and Anant P.~Godbole\\
Department of Mathematics and Statistics\\
East Tennessee State University}
\begin{document}
\def\qed{\vbox{\hrule\hbox{\vrule\kern3pt\vbox{\kern6pt}\kern3pt\vrule}\hrule}}
\def\ms{\medskip}
\def\n{\noindent}
\def\ep{\varepsilon}
\def\G{\Gamma}
\def\lr{\left(}
\def\ls{\left[}
\def\rs{\right]}
\def\lf{\lfloor}
\def\rf{\rfloor}
\def\lg{{\rm lg}}
\def\lc{\left\{}
\def\rc{\right\}}
\def\rr{\right)}
\def\ph{\varphi}
\def\p{\mathbb P}
\def\nk{n \choose k}
\def\cA{{\cal A}}
\def\s{\cal S}
\def\e{\mathbb E}
\def\v{\mathbb V}
\newcommand{\begp}{\begin{proposition}}
\newcommand{\enp}{\end{proposition}}
\newcommand{\begt}{\begin{thm}}
\newcommand{\ent}{\end{thm}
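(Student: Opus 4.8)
The final line of the excerpt is not a proposition but a \LaTeX{} macro definition: it introduces \texttt{ent} as an abbreviation expanding to the command that closes the paper's theorem environment, mirroring the neighbouring definitions of \texttt{begp}, \texttt{enp}, and \texttt{begt}. It therefore asserts nothing mathematical, and there is no lemma, proposition, or theorem here to prove. The excerpt has evidently been truncated after the preamble and the block of convenience macros but before the first displayed result is stated, so the ``statement'' to which a proof should be attached is simply absent.

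Accordingly, the only honest proposal I can make is to record this fact rather than to supply an argument. Were the intended result present---most plausibly a formula for the distribution of the position of the first ascent in a uniformly random $123$-avoiding permutation of length $n$, together with its limiting behaviour---the natural line of attack would be to count the $123$-avoiding permutations whose initial decreasing run has a prescribed length and to divide by the Catalan number $C_n$. The single hard step in that hypothetical argument would be extracting a clean closed form for that count and then controlling the resulting Catalan ratios as $n \to \infty$. None of this, however, can be carried out until the actual statement is given, and I deliberately refrain from asserting any such result here, since the excerpt commits the paper to no claim at this point.
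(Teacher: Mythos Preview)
Your diagnosis is correct: the excerpt is a sliver of the preamble's macro block (the definitions of \texttt{begt} and \texttt{ent} as shorthands for opening and closing the theorem environment), not a mathematical assertion, and the paper attaches no proof to it. For orientation, the paper's only use of that environment is Theorem~3.3, which establishes a bijection between the Dyck paths arising via Krattenthaler's map from $123$-avoiding $n$-permutations with first ascent at position $k$ and Tedford's lattice paths counted by $C_{n,k}$; but nothing in the fragment you were handed commits the paper to that or any other claim, so declining to invent a proof was the right response.
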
}
\newcommand{\begl}{\begin{lem}}
\newcommand{\enl}{\end{lem}}
\newcommand{\begc}{\begin{corollary}}
\newcommand{\enc}{\end{corollary}}
\newcommand{\begcl}{\begin{claim}}
\newcommand{\encl}{\end{claim}}
\newcommand{\begr}{\begin{remark}}
\newcommand{\enr}{\end{remark}}
\newcommand{\begal}{\begin{algorithm}}
\newcommand{\enal}{\end{algorithm}}
\newcommand{\begd}{\begin{definition}}
\newcommand{\enf}{\end{definition}}
\newcommand{\begx}{\begin{example}}
\newcommand{\enx}{\end{example}}
\newcommand{\bega}{\begin{array}}
\newcommand{\ena}{\end{array}}
\newcommand{\bsno}{\bigskip\noindent}
\newcommand{\msno}{\medskip\noindent}
\newcommand{\oM}{M}
\newcommand{\omni}{\omega(k,a)}
\newcommand{\yjn}{Y_{j,n}}
\newcommand\gd{\delta}
\newcommand\gD{\Delta}
\newcommand\gl{\lambda}
\newcommand\gL{\Lambda}
\newcommand{\ijn}{I_{j,n}}
\newcommand{\Bin}{B_{i,n}}
\newcommand{\yjkn}{Y_{j,k,n}}
\newcommand{\ijkn}{I_{j,k,n}}
\newcommand{\Bikn}{B_{i,k,n}}
\newcommand{\skan}{S_k(\alpha,n)}
\newcommand{\san}{S_2(\alpha,n)}
\hyphenation{Quick-sort}
\newcommand\urladdrx[1]{{\urladdr{\def~{{\tiny$\sim$}}#1}}}
\newtheorem{thm}{Theorem}[section]
\newtheorem{con}{Conjecture}[section]
\newtheorem{claim}[thm]{Claim}
\newtheorem{definition}[thm]{Definition}
\newtheorem{lem}[thm]{Lemma}
\newtheorem{cor}[thm]{Corollary}
\newtheorem{remark}[thm]{Remark}
\newtheorem{prp}[thm]{Proposition}
\newtheorem{ex}[thm]{Example}
\newtheorem{eq}[thm]{equation}
\newtheorem{que}{Problem}[section]
\newtheorem{ques}[thm]{Question}
\providecommand{\floor}[1]{\left\lfloor#1\right\rfloor}
\maketitle
\begin{abstract}  
It is natural to ask, given a permutation with no three-term ascending subsequence, at what index the first ascent occurs.  We shall show, using both a recursion and a bijection, that the number of 123-avoiding permutations at which the first ascent occurs at positions $k,k+1$ is given by the $k$-fold Catalan convolution $C_{n,k}$ \cite{andr},\cite{rege},\cite{telf}.  For $1\le k\le n$, $C_{n,k}$ is also seen to enumerate the number of 123-avoiding permutations with $n$ being in the $k$th position.  Two interesting discrete probability distributions, related obliquely to the Poisson and geometric random variables, are derived as a result.
\end{abstract}
\section{Introduction} For $n\ge0$, the Catalan numbers $C_n$ are given by
\[C_n=\frac{1}{n+1}{{2n}\choose{n}};\] generalizing this fact, 
Catalan \cite{cata} proved the $k$-fold Catalan convolution formula
\[C_{n,k}:=\sum_{i_1+\ldots+i_k=n}\prod_{r=1}^kC_{i_r-1}=\frac{k}{2n-k}{{2n-k}\choose{n}}.\]
The theory of pattern avoidance in permutations is now well-established and thriving, and a survey of the many results in that area may be found in the text by Kitaev\cite{kita}.  One of the earliest and most fundamental results in the field is that the number of permutations of $[n]:=\{1,2,\ldots,n\}$ in which the longest increasing sequence is of length $\le2$, the so-called 123-avoiding permutations, is given by the Catalan numbers, and classical bijective techniques give that the each of the $ijk$-avoiding permutations with $\{i,j,k\}=\{1,2,3\}$ are equinumerous.  In this paper, we ask a very natural question, namely in how many permutations in which the longest increasing subsequence is of length at most 2, does the first ascent occur in positions $k, k+1$.  Actually, we were web-searching for the answer to this question to use in a different context, and were rather surprised to find that the solution appeared to be not known explicitly:  Bousquet-M\'elou \cite{bous} addressed this question indirectly when she used the location of the first ascent as a ``catalytical variable" in the ``laziest proof, combinatorially speaking," of the fact that there are $C_n$ 123-avoiding permutations.  In Sections 2 and 3 of this paper, we will give recursive and bijective proofs respectively of the fact that there are $C_{n,k}$ 123-avoiding permutations on $[n]$ for which the first ascent occurs at positions $k,k+1$.  Critical to the bijective proof are the various combinatorial interpretations of the Catalan convolutions due to Tedford \cite{telf}, and the bijections between Dyck paths and avoiding permutations due to Krattenthaler\cite{krat}.  In Section 2, additionally, we show that $C_{n,k}$ also enumerates 123-avoiding permutations with the position of ``$n$" being $k\in[1,n]$.  Finally, in Section 4, we produce two interesting probability distributions on ${\bf Z}^+$ related to these issues. These are reminiscent of the geometric and Poisson random variables, and are studied systematically in \cite{jie}.
\section{Recursive Proof}

Throughout, we refer to the first ascent as being in position $k$ if the ascent is at the $k$th and $k+1$st positions of the permutation. If a permutation of $[n]$ letters has no ascents at all (i.e., it is the permutation $\{n,n-1,\ldots,2,1\}$), we define it as having first ascent in position $n$, as though it had a first ascent ``after" the last letter in the permutation -- perhaps using a number such as $1.5$ in the $(n+1)$st spot.

Any $123$-avoiding permutation of $[n]$ becomes a $123$-avoiding permutation of $[n-1]$ when the letter $n$ is removed from it, so we can view each $123$-avoiding permutation of $[n]$ as being grown uniquely by taking a $123$-avoiding permutation of $[n-1]$ and inserting the letter $n$ into certain positions.

Suppose we have a $123$-avoiding permutation of $[n-1]$ with first ascent in position $k$. How can this be grown into a longer 123-avoiding permutation by inserting $n$?  If $n$ is inserted at the very front of this permutation,  the resulting permutation is still $123$-avoiding with the original first ascent being ``pushed forward" one position due to the presence of $n$.  If $n$ is inserted anywhere between the very front of this permutation and the peak of the original first ascent, i.e., as anything from the $2$nd letter to the $k+1$st letter of the permutation, then $n$ becomes the peak of the new first ascent, which therefore has position $1$ less than the position of $n$. Furthermore, as there are no ascents before $n$, and as $n$ cannot be involved in any subsequent ascents, the new permutation is still $123$-avoiding. 
Finally, if $n$ is inserted after the original first ascent, then the resulting permutation is no longer $123$-avoiding.
Note that all of the above hold exactly even if the original permutation is the descending permutation $\{n-1,n-2,\ldots,2,1\}$ with first ascent in position $k=n-1$.  To summarize, a $123$-avoiding permutation of length $n-1$ with first ascent in position $k$ gives rise to exactly one $123$-avoiding permutation of length $n$ with first ascent in position $i$ for each $1\leq i\leq k+1$, and each $123$-avoiding permutation of length $n$ must be grown in such a way.

Turning this around, the number $A_{n,k}$ of $123$-avoiding permutations of length $n$ with first ascent in position $k$ is equal to the number of $123$-avoiding permutations of length $n-1$ with first ascent in position $k-1$ or later, as it is these permutations that give rise to them. Thus

$$A_{n,k}=\displaystyle\sum_{i=k-1}^n A_{n-1,i}$$

In particular, we note that 

$$A_{n,k}=\displaystyle\sum_{i=k-1}^n A_{n-1,i}=A_{n-1,k-1}+\displaystyle\sum_{i=k}^n A_{n-1,i}=A_{n-1,k-1}+A_{n,k+1}.$$

Since $A_{n,0}=0$ for all $n$, the above recursion indicates that $A_{n,1}=A_{n,2}$ for all $n$, with both being equal to the total number of $123$-avoiding permutations of length $n-1$. We can see that this is true: any such $n-1$ permutation can be uniquely grown into a $123$-avoiding permutation with first ascent in position 2 by inserting $n$ either as the third letter (if the original permutation did not have first ascent in position $1$) or as the first letter (otherwise); or it can be uniquely grown into a $123$-avoiding permutation with first ascent in position $1$ by inserting $n$ as the second letter.
Combined with the base cases $A_{n,0}=0$ and $A_{n,n}=1$ for all $n\geq 1$, this recurrence relation is sufficient to fully characterize $A_{n,k}$ for any $1\leq k\leq n$.

Note that $C_{n,0}=0$ and $C_{n,n}=1$ for all $n\geq 1$. Moreover, we find that the Catalan convolutions $C_{n,k}$ obey the same recurrence relation as above:

\begin{eqnarray*}&&C_{n-1,k-1}+C_{n,k+1}\\
\quad&{}&=\frac{k-1}{2n-k-1}\binom{2n-k-1}{n-1} + \frac{k+1}{2n-k-1}\binom{2n-k-1}{n}\\
\quad&{}&=\frac{k-1}{2n-k-1}\left(\frac{n}{2n-k}\right)\binom{2n-k}{n} + \frac{k+1}{2n-k-1}\left(\frac{n-k}{2n-k}\right)\binom{2n-k}{n}\\
\quad&{}&=\frac{k}{2n-k}\binom{2n-k}{n}=C_{n,k}.\end{eqnarray*}

Since $C_{n,k}$ obey the same recurrence relation as the $A_{n,k}$, and they have the same base cases (which generate their values for all $1\leq k\leq n$), we find that $C_{n,k}=A_{n,k}$ everywhere.

\begin{cor} The number of 123-avoiding permutations where $n$ is in the $k$th spot are also given by the Catalan convolutions $C_{n,k}$.  \end{cor}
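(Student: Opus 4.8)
To establish the corollary, the plan is to reduce it to the recursion just proved, by means of a short ``delete the largest entry'' bijection, in the same spirit as the insertion analysis above. Write $B_{n,k}$ for the number of $123$-avoiding permutations of $[n]$ having $n$ in position $k$, where $1\le k\le n$, and for such a permutation $\sigma$ let $\tau$ be the $123$-avoiding permutation of $[n-1]$ obtained by deleting the entry $n$. First I would prove that $\sigma\mapsto\tau$ is a bijection from the $123$-avoiding permutations of $[n]$ with $n$ in position $k$ onto the $123$-avoiding permutations of $[n-1]$ whose first $k-1$ entries are decreasing. Necessity is quick: those first $k-1$ entries are literally the first $k-1$ entries of $\sigma$, and any ascent among them would form a forbidden $123$ together with the entry $n$ sitting in position $k$. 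For the converse, and for injectivity, I would reinsert $n$ as the $k$th entry of such a $\tau$: the maximality of $n$ forces it to be the top of any $123$ it could belong to, which would require an ascending pair among the first $k-1$ entries of $\tau$ --- impossible --- so the result is again $123$-avoiding, and $\sigma$ is recovered from $\tau$ because the insertion slot $k$ is prescribed.

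The next step is to observe that ``the first $k-1$ entries of $\tau$ are decreasing'' says exactly that the first ascent of $\tau$ occurs in position $k-1$ or later, under the convention adopted above (so that the decreasing permutation of $[n-1]$, with first ascent declared to be in position $n-1$, is counted for every admissible $k$). Partitioning by the first-ascent position of $\tau$ then gives
$$B_{n,k}=\sum_{i=k-1}^{n-1}A_{n-1,i}=\sum_{i=k-1}^{n}A_{n-1,i},$$
the last equality because $A_{n-1,i}=0$ once $i\ge n$. But the right-hand side is precisely the value furnished for $A_{n,k}$ by the recursion $A_{n,k}=\sum_{i=k-1}^{n}A_{n-1,i}$ obtained above, so $B_{n,k}=A_{n,k}=C_{n,k}$, as claimed.

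The one point I expect to require genuine care is the exact characterization of which permutations $\tau$ arise as deletions --- that is, checking that ``first $k-1$ entries decreasing'' is simultaneously necessary and sufficient for reinsertion to preserve $123$-avoidance --- together with the bookkeeping at the two extremes: $k=1$, where the condition is vacuous, so $B_{n,1}$ counts all $C_{n-1}$ of the $123$-avoiding permutations of $[n-1]$ (in agreement with $C_{n,1}$), and $k=n$, where $\tau$ must be the decreasing permutation, giving $B_{n,n}=1=C_{n,n}$. A more self-contained alternative would be to show directly that $B_{n,k}$ obeys $B_{n,k}=B_{n-1,k-1}+B_{n,k+1}$ with base values $B_{n,0}=0$ and $B_{n,n}=1$; but routing everything through the recursion already in hand is shorter and avoids redoing that work.
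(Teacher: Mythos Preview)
Your argument is correct. The deletion map you describe is well-defined, and your characterization of its image --- $123$-avoiding permutations of $[n-1]$ whose first $k-1$ entries are decreasing, equivalently whose first ascent lies in position $k-1$ or later --- is exactly right; the identification $B_{n,k}=\sum_{i\ge k-1}A_{n-1,i}=A_{n,k}$ then follows immediately from the summation form of the recursion.

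Your route, however, differs from both of the paper's. The paper's first argument splits into cases $k=1$, $k=2$, $k\ge 3$ and, for $k\ge 3$, invokes the structural fact (emphasized again in Section~3) that in a $123$-avoider with first ascent at $(k-1,k)$ the entry $n$ must sit either at position~$1$ or at position~$k$; subtracting off the position-$1$ contribution yields $\alpha_{n,k}=C_{n,k-1}-C_{n-1,k-2}=C_{n,k}$ via the two-term recursion. The paper's second argument is a direct bijection between $123$-avoiders with first ascent at $(k,k+1)$ and those with $n$ in position~$k$, obtained by relocating~$n$. Your approach is more uniform: it handles all~$k$ at once, uses only the sum form of the recursion already established, and avoids the ``$n$ at front or at the peak'' case analysis. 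What the paper's proofs buy is a closer tie to the Dyck-path picture of Section~3 (the first proof) and an explicit size-preserving bijection witnessing $A_{n,k}=B_{n,k}$ without passing through length-$(n-1)$ objects (the second).
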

\begin{proof}
The result is obvious for $k=1$ where the answer equals $C_{n-1}=C_{n,1}$.  Let $k=2$.  We have that $n$ is in position 2 in a 123-avoiding permutation iff the first ascent is at positions (1,2), necessarily to $n$.  Thus again there are $C_{n,1}=C_{n-1}$ such possibilities.  For $k\ge3$ let $\alpha_{n,k}$ be the number of 123-avoiding permutations on $[n]$ with $n$ in the $k$th spot.  Since, as will be emphasized in Section 3, for the first ascent to be at spots $(k-1,k)$, $n$ must either be in position 1, or the first ascent must be to $n$, we have that
\[\alpha_{n,k}=C_{n,k-1}-C_{n-1,k-2}=C_{n,k},\]
by the above recursion.

To give an alternate bijective proof for $k\ge 2$, we proceed as follows.  Consider a 123-avoiding permutation $\pi$ with first ascent at spots $(k,k+1)$,  and move $n$, originally in position 1 or $k+1$, into position $k$, while keeping the relative order of the other numbers unchanged.  Regardless of whether $n$ was in position 1 or position $(k+1)$, the new permutation has first ascent at position $k-1$ and is still 123-free.  Since just one of these original configurations is valid for a given relative ordering of $[n-1]$, we see that this map $\varphi$ from the set of 123-avoiders with first ascent at $(k, k+1)$ to the set of 123-avoiding permutations with $n$ in position $k$ is one-to-one.  Moreover the map has an inverse: If $\varphi(\pi)(k-1)< \varphi(\pi)(k+1)$, $n$ must have been at the beginning of $\pi$, and $n$ must have been in position $k+1$ if we find that $\varphi(\pi)(k-1)> \varphi(\pi)(k+1)$. \hfill\end{proof}

\section{Bijective Proof}
We have, from Tedford \cite{telf} that $C_{n,k}$ is given (adjusting for his different indexing) by the number of lattice paths from $(k-1, 0)$ to $(n-1, n-1)$ consisting of steps of $(0,1)$ and $(1,0)$ and never crossing above the line $x=y$. Note that these paths, and hence, $C_{n,k}$ are in bijection with these same types of paths between $(k, 1)$ and $(n,n)$. We will show that these paths are in bijection with the paths corresponding, by the Krattenthaler bijection, to $123$-avoiding $n$-permutations with first ascent at $k$.

Krattenthaler's bijection between $123$-avoiding permutations and Dyck paths can be described as follows \cite{krat}: Given a permutation $\pi$ of $n$ integers, denote the right to left maxima (RLM) of $\pi$, reading from left to right, by $\{m_s, m_{s-1}, \dots , m_2, m_1\}$ and denote the (necessarily descending) word between $m_{i+1}$ and $m_i$ by $w_i$. $\pi$ will now read left to right as $w_sm_s \dots w_2m_2w_1m_1$. Read $\pi$ from left to right, and draw as follows, beginning at $(0,0)$: upon encountering $w_i$ add $|w_i|+1$ steps in the $x$ direction, where $|w_i|$ is the length of $w_i$. Upon encountering $m_i$, add $m_i-m_{i-1}$ steps in the $y$ direction ($m_0$ is taken to be $0$ by convention). This will give a lattice path between $(0,0)$ and $(n,n)$ consisting of steps of $(1,0)$ and $(0,1)$ and never crossing above the line $x=y$.
\begin{ex}
The permutation $76584213$ corresponds with the path encoded by $XXXXYYYYXYXXXYYY$ where $X, Y$ represent steps to the east ($E$) and north ($N$) respectively.
\end{ex}
\begin{lem}
If $\pi$ is a 123-avoiding $n$-permutation with first ascent at $k$, then $\mu$, the leftmost right to left maximum preceded by a non-empty word, is at position $k+1$.  Also, $\mu$ is either $n$, or $\mu$ is one less than the nearest right to left maximum to its left.
\end{lem}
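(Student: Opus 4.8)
The plan is to read off the rigid structure that the first-ascent condition forces on $\pi$ and then push it through Krattenthaler's decomposition. Write $\pi=\pi(1)\pi(2)\cdots\pi(n)$. Saying the first ascent is at $k$ means $\pi(1)>\pi(2)>\cdots>\pi(k)$ and $\pi(k)<\pi(k+1)$. (When $k=n$, $\pi$ is the full descent, every entry is a right-to-left maximum, no maximum is preceded by a non-empty word, and the statement is the vacuous ``$\mu$ sits at position $n+1$'' under the paper's convention; so assume $1\le k\le n-1$.) The single input from $123$-avoidance is that $\pi(k+1)$ must be a right-to-left maximum: if some $l>k+1$ had $\pi(l)>\pi(k+1)$, then $\pi(k)<\pi(k+1)<\pi(l)$ would be a forbidden $123$. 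Hence $a:=\pi(k+1)=\max\{\pi(k+1),\dots,\pi(n)\}$.

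Next I would pin down exactly which of $\pi(1),\dots,\pi(k+1)$ are right-to-left maxima. Since $\pi(1)>\cdots>\pi(k)$ is decreasing, the entries of this block exceeding $a$ form an initial segment $\pi(1),\dots,\pi(t)$ for some $t\ge 0$, with $\pi(t+1),\dots,\pi(k)<a$; note $t\le k-1$ since $\pi(k)<\pi(k+1)=a$. Each of $\pi(1),\dots,\pi(t)$ dominates everything to its right — the rest of the descent, and the whole suffix, which is bounded by $a<\pi(t)$ — so these are right-to-left maxima, while each of $\pi(t+1),\dots,\pi(k)$ is smaller than $a=\pi(k+1)$, which lies to its right, so none of these is. Thus, read left to right, the right-to-left maxima of $\pi$ begin $\pi(1),\dots,\pi(t),\pi(k+1),\dots$. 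In the word $w_sm_s\cdots w_1m_1$, the maxima $\pi(1),\dots,\pi(t)$ occupy the consecutive positions $1,\dots,t$, so each is preceded by the empty word, whereas $\pi(k+1)$ is preceded by the word $\pi(t+1)\cdots\pi(k)$ of length $k-t\ge 1$. Hence $\mu=\pi(k+1)$, in position $k+1$, which is the first assertion.

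For the value of $\mu=a$ I would use a one-line pigeonhole. If $t=0$, then $\mu$ is the leftmost right-to-left maximum; every entry of $\pi(1),\dots,\pi(k)$ is $<a$ and every entry of the suffix is $\le a$, so $a$ exceeds all other entries and $a=n$. If $t\ge 1$, the right-to-left maximum immediately left of $\mu$ is $\pi(t)$, with $\pi(t)>a$; were $\pi(t)\ge a+2$, a value $v$ with $a<v<\pi(t)$ could not occur among positions $1,\dots,t$ (entries $\ge\pi(t)>v$), nor among $t+1,\dots,k$ (entries $<a<v$), nor in the suffix (entries $\le a<v$) — a contradiction. So $\pi(t)=a+1$, i.e., $\mu$ is one less than the nearest right-to-left maximum to its left.

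The only real obstacle is the bookkeeping in the middle step: correctly isolating the index $t$ and checking that the ``tail'' $\pi(t+1)\cdots\pi(k)$ of the initial descent — the part that dips below $\pi(k+1)$ — is precisely the non-empty word preceding $\mu$, which is where the condition $\pi(k)<\pi(k+1)$ is used. Once that description of the first few right-to-left maxima is in hand, both halves of the lemma follow from the single $123$ observation and the pigeonhole count, with no properties of the associated Dyck path beyond the definition of the RLM-decomposition.
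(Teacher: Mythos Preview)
Your proof is correct and follows essentially the same line as the paper's: both arguments hinge on the observation that $\pi(k+1)$ must be a right-to-left maximum (else a later larger entry completes a $123$), and then analyze which of $\pi(1),\dots,\pi(k)$ are right-to-left maxima. The paper phrases the second part by splitting on whether $\pi(1)=n$ and, when it is, peeling off a ``regular descent'' $n,n-1,\dots,n-a$ before recursing on the remaining $(n-a-1)$-permutation; your index $t$ and one-line pigeonhole accomplish the same thing directly, recovering that the initial block of maxima is exactly $n,n-1,\dots,\pi(k+1)+1$ without the informal recursive step.
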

\begin{proof}
The first ascent of $\pi$ must be to a RLM, as follows: If the first ascent is either at positions $n-1$ or $n$, we easily or vacuously have the next symbol being a RLM.  If the first ascent is at position $k\in[1,n-2]$ then the $k+1$st symbol must be a RLM, since otherwise the next RLM to the right would enable the formation of a 123. If the permutation does not begin with $n$, then $n$ must be at the top of the first ascent. This is because if $n$ is not the first number, it must be the top of an ascent, but if an ascent precedes that with the $n$, that ascent, along with $n$, would form a $123$. This is the case in which $\mu=n$. If $n$ is the first term in $\pi$, then $\pi$ begins with the integers $n(n-1)...(n-a)$ for some $0 \leq a \leq k-2$ (we choose the maximum such $a$, and from now on we will say that ``$\pi$ begins with a regular descent from $n$ of length $a+1$"). The upper bound on $a$ comes from the fact that if $\pi$ began with regular descent from $n$ of length $k$ (i.e., if we had $a=k-1$) then all integers greater than the one appearing at index $k$ have already appeared and so the first ascent could not be at $k$. After the end of the regular descent from $n$, we can think of the rest of the permutation as a $123$-avoiding $(n-a-1)$-permutation, and so, by the same reasoning as above, along with the fact that $n-a-1$ cannot be the first term, or else it would lengthen the regular descent from $n$, $n-a-1$ must be at the top of the first ascent, and must be preceded by a non-empty word.
\end{proof}
\begin{thm}
The Dyck paths that correspond, by the Krattenthaler bijection, with $123$-avoiding $n$-perms with first ascent at $k$ are in bijection with the lattice paths given by Tedford \cite{telf}, as counted by $C_{n,k}$.
\end{thm}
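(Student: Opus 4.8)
The plan is to determine precisely which Dyck paths (lattice paths from $(0,0)$ to $(n,n)$ with unit steps $E=(1,0)$ and $N=(0,1)$, never passing above the line $y=x$) arise as Krattenthaler images of $123$-avoiding $n$-permutations with first ascent at $k$, and then to write down an explicit bijection between those paths and the Tedford paths from $(k,1)$ to $(n,n)$.

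First I would read off the initial segment of the Krattenthaler path. Since $n$ is always the leftmost right-to-left maximum of $\pi$, the word $w_s$ is exactly the (necessarily descending) block lying to the left of $n$. If $\pi$ does not begin with $n$, the Lemma places $\mu=n$ at position $k+1$, so $|w_s|=k$ and the path begins $E^{k+1}N\cdots$. If instead $\pi$ begins with a regular descent $n,n-1,\dots,n-a$ of length $a+1$ (with $0\le a\le k-2$), then each of $n,n-1,\dots,n-a$ is a right-to-left maximum whose preceding word is empty, and consecutive ones differ in value by exactly $1$, so together they contribute the unit staircase $(EN)^{a+1}$; the next right-to-left maximum is $\mu=n-a-1$, which sits at position $k+1$ and is preceded by a descending block of length $k-a-1$, so the path continues $E^{k-a}N\cdots$. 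Writing $b=a+1$ (so that $b=0$ in the first case), every such Krattenthaler path therefore has the normal form
$$(EN)^{b}E^{k+1-b}NR,\qquad 0\le b\le k-1,$$
where $R$ is a lattice path from $(k+1,b+1)$ to $(n,n)$ that stays weakly below $y=x$. (The case $k=n$ is degenerate: the reverse identity, whose path is $(EN)^{n}$, matching $C_{n,n}=1$.) Because Krattenthaler's map is a bijection and these families, as $k$ ranges over $1,\dots,n$, partition all Dyck paths, the paths of this form are exactly the Krattenthaler images being counted.

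Next I would define a map $\Phi$ sending the path above to $N^{b}ER$, now read as a path from $(k,1)$ to $(n,n)$. The prefix $N^{b}$ reaches $(k,b+1)$, which lies weakly below $y=x$ precisely because $b\le k-1$; then $E$ reaches $(k+1,b+1)$; and $R$ is the identical suffix, so $\Phi(P)$ is a legitimate Tedford path. For the inverse, observe that a Tedford path from $(k,1)$ to $(n,n)$ may begin with at most $k-1$ north steps before it would leave the region $y\le x$, hence factors uniquely as $N^{b}ER$ with $0\le b\le k-1$ and $R$ a below-diagonal path from $(k+1,b+1)$ to $(n,n)$; returning the path $(EN)^{b}E^{k+1-b}NR$ recovers $P$. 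Thus $\Phi$ is the asserted bijection, and composing it with Krattenthaler's bijection gives a purely combinatorial proof that $A_{n,k}=C_{n,k}$.

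The step I expect to demand the most care is the first one: distilling the exact normal form $(EN)^{b}E^{k+1-b}NR$ from the Krattenthaler drawing rules together with the Lemma. In particular one must verify that when $\pi$ starts with $n$ its regular-descent prefix genuinely encodes a unit staircase (each of $n,n-1,\dots,n-a$ being a right-to-left maximum with empty preceding word, with successive values differing by exactly one), and that the descending block preceding $\mu$ has length exactly $k-a-1$ so that it contributes the run $E^{k-a}$. Once that normal form is in hand, the bijection with Tedford's paths and its verification are essentially bookkeeping; they encode the identity that the number of below-diagonal paths from $(k,1)$ to $(n,n)$ equals the sum over $0\le b\le k-1$ of the number of below-diagonal paths from $(k+1,b+1)$ to $(n,n)$, which one obtains simply by peeling off leading north steps.
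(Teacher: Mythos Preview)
Your proof is correct and follows essentially the same route as the paper's. Both use the Lemma to pin down the initial segment of the Krattenthaler path---a staircase $(EN)^b$ for the regular descent from $n$, followed by the run $E^{k+1-b}N$ for the nonempty word preceding $\mu$---so that the path passes through $(k+1,b+1)$ via a uniquely determined prefix, and then both peel off the initial north steps of a Tedford path from $(k,1)$ to match the residual path from $(k+1,b+1)$. Your map $\Phi$ is simply the paper's bijection written in the opposite direction (Dyck-to-Tedford rather than Tedford-to-residual), and your partition argument makes explicit the ``vice versa'' that the paper asserts without detail; you also treat the degenerate case $k=n$ separately, which the paper leaves implicit.
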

\begin{proof}
Let $\pi$ be a $123$-avoiding $n$-permutation with first ascent at $k$. By Lemma 3.2, the following two cases are exhaustive: either (i) the first ascent is to $n$, and so $|w_s|=k$ meaning that the path begins with $k+1$ $x$ steps for a word of length $k$, followed by a $y$ step for an RLM, or (ii) $\pi$ begins with a regular descent from $n$ of length $j$ where $1 \leq j \leq k-1$ and subsequently contains a word of length $k-j$, and then a RLM with value less than the last term in the regular descent from $n$. In the latter case, the path will begin with $j$ iterations of the pattern ($x$ step, $y$ step) each representing an empty word followed by a RLM one greater than the following RLM, and then will have $k-j+1$ $x$ steps, corresponding to the word of length $k-j$, and then a $y$ step corresponding to a RLM. In the first case, we have a specific path from $(0, 0)$ to $(k+1, 1)$. In the second case, we have one specific path for each $1 \leq j \leq k-1$ from $(0,0)$ to $(k+1, j+1)$. This is to say that every Dyck path that gets to one of these points via the path associated with it represents a $123$-avoiding permutation with first ascent at $k$, and vice versa. Therefore the number of $123$-avoiding $n$-permutations with first ascent at $k$ is given by the number of unique ways to finish a Dyck path from each of these endpoints, i.e., denoting by  ``good" paths the ones that do not cross the line $x=y$,
$$C_{n,k}=\sum_{i=1}^{k}\vert\text{good lattice paths with $E/N$ steps from $(k+1,i)$ to $(n,n)$}\vert.$$
A bijection between these paths and the paths from $(k,1)$ to $(n,n)$ is given by taking a path from $(k,1)$ to $(n,n)$, and disregarding every step up through the first $x$ step, so that a path from $(k+1,i);  1 \leq i \leq k$ to $(n,n)$ remains.
\end{proof}
\section{Limit Distributions}  The probability that a random permutation on $[n]$ has its first ascent at position $k$ is given, for $1\le k\le n-1$, by $\frac{k}{(k+1)!}$.  To see this, choose any one of the $k+1$ elements in positions 1 through $k+1$, except for the smallest, to occupy the $k+1$st position, and then arrange the other elements in a monotone decreasing fashion.  The chance that the first ascent is at position $n$ is, of course, $\frac{1}{n!}$.  We will find it more convenient in this section to consider infinite analogs of the finite distributions we derive.  An infinite permutation may be realized, e.g., by considering the order statistics $X_{(1)}<X_{(2)}<\ldots$ of a sequence $X_1,X_2\ldots$ of independent and identically distributed (i.i.d.) random variables with say a uniform distribution on [0,1].  Under this scheme we get the first ascent distribution as being
\[f(x)=\frac{x}{(x+1)!}, x=1,2,\ldots,\]
which is similar in form to the unit Poisson distribution with parameter $\lambda=1$ -- and mass function $g(y)=e^{-1}/y!; y=0,1,\ldots$, mean and variance equal to 1,  and generating function $\e(s^Y)=\exp\{s-1\}$.  By contrast, it is shown in \cite{jie} that the first ascent distribution above satisfies 
\[\e(X)=e-1; \v(X)=e(3-e); \e(s^X)=\frac{(1-e^s+se^s)}{s}.\]

What, on the other hand, can be said about the location distribution of the first ascent in a random 123-avoiding permutation?  We see from our earlier results that for a randomly chosen 123-avoiding permutation on $[n]$ the distribution of the location of first ascent is given by
\[f(k)=\frac{C_{n,k}}{C_n}=k\frac{(2n-k-1)!(n+1)!}{(2n)!(n-k)!},\enspace k=1,2,\ldots,n,\]
which, for small $k$ and large $n$, may be approximated by
$f(k)=\frac{k}{2^{k+1}}.$
Accordingly, in \cite{jie} the authors studied the geometric-like distribution on ${\bf Z}^+=1,2,\ldots$ defined by
\[f(w)=\frac{w}{2^{w+1}}, w=1,2,\ldots,\]
showing that 
\[\e(W)=3; \e(s^W)=\frac{s}{s^2-4s+4}.\]  Roughly speaking, the above facts indicate that for a random permutation on a large $[n]$, we expect the first ascent to be at position $e-1\approx 1.718$, whereas this value increases to 3 for a random 123-avoiding permutation.

\section{Open Questions}  A whole series of questions would relate to enumeration of permutations, free of a certain pattern, in which the first occurrence of another pattern occurs at a certain spot.  Another direction to pursue might be to consider a specific partial order on $[n]$ and answer the same question as that studied in this paper.  Finally, can we use the notion of first ascents in the context of 123-avoiding permutations to give another combinatorial proof of Shapiro's Catalan Convolution identity, as in \cite{andr}, \cite{nagy} (both papers were written in response to a query of R.~M.~Stanley)?

\section{Acknowledgments} The research of all three authors was supported by NSF Grant 1263009.

\end{document}